\newcommand{\Bcal}{\mathcal{B}}
\newcommand{\Hcal}{\mathcal{H}}
\newcommand{\Lcal}{\mathcal{L}}
\newcommand{\Xcal}{\mathcal{X}}
\newcommand{\Ycal}{\mathcal{Y}}
\newcommand{\Q}{\mathbb{Q}}
\newcommand{\C}{\mathbb{C}}
\newcommand{\N}{\mathbb{N}}
\newcommand{\del}{\delta}
\newcommand{\sig}{\sigma}
\newcommand{\la}{\lambda}
\newcommand{\tet}{\theta}
\newcommand{\om}{\omega}
\newcommand{\Om}{\Omega}
\newcommand{\ol}{\overline}
\newcommand{\br}{\vspace{3 mm}}
\newcommand{\imp}{\Rightarrow}
\newcommand{\ext}{{\rm{ext\,}}}
\newcommand{\supp}{{\rm{supp\,}}}
\newcommand{\Homeo}{{\rm Homeo}}
\theoremstyle{plain}
\newtheorem{thm}{Theorem}[section]
\newtheorem{cor}[thm]{Corollary}
\theoremstyle{definition}
\newtheorem{defn}[thm]{Definition}
\newtheorem{rmk}[thm]{Remark}
\begin{document}

%%%%%%%%%%%%%%%%%%%%%%%%%%%%%%%%%%%%%%%%%%%%%%%%%%%%%%%%%%%%%%%%%%%%
%%%%%%%%%%%%%%%%%%%   T H E    T I T L E    %%%%%%%%%%%%%%%%%%%%%%%%%%
%%%%%%%%%%%%%%%%%%%%%%%%%%%%%%%%%%%%%%%%%%%%%%%%%%%%%%%%%%%%%%%%%%%%

%\title[Positive entropy systems are dominant]
%{Positive entropy systems are dominant}
\title[The Unique stationary boundary property]
{The Unique stationary boundary property}

\author{Eli Glasner}

\address{Department of Mathematics\\
     Tel Aviv University\\
         Tel Aviv\\
         Israel}
\email{glasner@math.tau.ac.il}

%%%%%%%%%%%%%%%%%%%%%%%%%%%%%%%%%%%%%%%%%%%%%%%%%%%%%%%%%%%%%%%%%%%%%
%%%%%%%%%%%%%%%%%%%%%%%%  CONTENTS   %%%%%%%%%%%%%%%%%%%%%%%%%%%%%%%%
%%%%%%%%%%%%%%%%%%%%%%%%%%%%%%%%%%%%%%%%%%%%%%%%%%%%%%%%%%%%%%%%%%%%%

%\tableofcontents
\setcounter{secnumdepth}{2}

%\addtocontents{toc}{subsection}{\protect\hspace{0.5cm}}

%%%%%%%%%%%%%%%%%%%%%%%%%%%%%%%%%%%%%%%%%%%%%%%%%%%%%%%%%%%%%%%%%%%%%
%%%%%%%%%%%%%%%%%%%%%%%%%%%%  TEXT   %%%%%%%%%%%%%%%%%%%%%%%%%%%%%%%%
%%%%%%%%%%%%%%%%%%%%%%%%%%%%%%%%%%%%%%%%%%%%%%%%%%%%%%%%%%%%%%%%%%%%%

\setcounter{section}{0}
%\setcounter{page}{0}

%%%%%%%%%%%%%%%%%%%%%%%%%%%%%%%%%%%%%%%%%%%%%%%%%%%%%%%%%%%%%%%%%%%%%
%%%%%%%%%%%%%%%%%%%       Inroduction         %%%%%%%%%%%%%%%%%%%%%%%
%%%%%%%%%%%%%%%%%%%%%%%%%%%%%%%%%%%%%%%%%%%%%%%%%%%%%%%%%%%%%%%%%%%%%

%\subjclass[2010]{Primary 37A15, 37A35, 37B05}
%
%\keywords{distal systems, structure theory, cocycles}

%\begin{abstract}
%
%\end{abstract}
%
%\keywords{ }
%
%\thanks{The first named author was supported by grant \# 1194/19  of the Israel Science Foundation.}

\subjclass[2010]{Primary 22D40, 37A50, 37B05}
\keywords{$\mu$-stationary measures, Poisson boundary, strong proximality}

\begin{abstract}
%Let $G$ be a locally compact group and $\mu$ an admissible probability measure on $G$.
%Let $(B,\nu)$ be the universal topological Poisson $\mu$-boundary of $(G,\mu)$ and  
%$\Pi_s(G)$ the universal minimal strongly proximal $G$-flow.
%Extending a recent result of Hartman and Kalantar we show that
%the following conditions are equivalent 
%(i) the measure $\nu$ is the unique $\mu$-stationary measure on $B$,
%(ii) $(B,G) \cong (\Pi_s(G),G)$. 
Let $G$ be a locally compact group and $\mu$ an admissible probability measure on $G$.
Let $(B,\nu)$ be the universal topological Poisson $\mu$-boundary of $(G,\mu)$ and  
$\Pi_s(G)$ the universal minimal strongly proximal $G$-flow.
%In this note we present an alternative proof to a 
%recent result of Hartman and Kalantar.
This note is inspired by a recent result of Hartman and Kalantar.
 We show that for a locally compact second countable group $G$
the following conditions are equivalent: 
(i) the measure $\nu$ is the unique $\mu$-stationary measure on $B$,
(ii) $(B,G) \cong (\Pi_s(G),G)$. 
\end{abstract}
%
%\keywords{ }
%
%\thanks{The first named author was supported by grant \# 1194/19  of the Israel Science Foundation.}

\begin{date}
{May 1, 2023}
\end{date}

\maketitle
%%%%%%%%%%%%%%%%%%%%%%%%%%%%%%%%%%%%%%%%%%%%%%%%%%%%%%%%%%%%%%%%%%%%%
%%%%%%%%%%%%%%%%%%%%%%%%  CONTENTS   %%%%%%%%%%%%%%%%%%%%%%%%%%%%%%%%
%%%%%%%%%%%%%%%%%%%%%%%%%%%%%%%%%%%%%%%%%%%%%%%%%%%%%%%%%%%%%%%%%%%%%

%\tableofcontents
\setcounter{secnumdepth}{2}

%\addtocontents{toc}{subsection}{\protect\hspace{0.5cm}}

\setcounter{section}{0}
%\setcounter{page}{0}

%%%%%%%%%%%%%%%%%%%%%%%%%%%%%%%%%%%%%%%%%%%%%%%%%%%%%%%%%%%%%%%%%%%%%
%%%%%%%%%%%%%%%%%%%       Inroduction         %%%%%%%%%%%%%%%%%%%%%%%
%%%%%%%%%%%%%%%%%%%%%%%%%%%%%%%%%%%%%%%%%%%%%%%%%%%%%%%%%%%%%%%%%%%%%

\section{Introduction}

This is a short note with a long introduction. Those readers who are familiar with the theory
of stationary dynamical systems, as presented e.g. in \cite{Fu-71},  \cite{Gl-76},  \cite{FG-10} and \cite{HK-23} 
can go directly to Section \ref{sec:main} and come back to the introduction when needed.

\subsection{Flows}
Let $G$ be a locally compact second countable group (LCSC for short).
A {\em flow} is a pair $(X,G)$ where $X$ is a compact Hausdorff space and $G$ acts on $X$ via
a continuous homomorphism of $G$ into the Polish group $\Homeo(X)$ of self-homeomorphisms
of $X$ equipped with the  topology of uniform convergence.

A flow $(X,G)$ is {\bf minimal} if every orbit $Gx$ is dense in $X$. A pair of points $x, x' \in X$ 
is {\bf proximal} if there is a net $g_i \in G$ and a point $z  \in X$ such that $\lim_{i}g_ix = \lim_{i} g_i x' =z$.
The flow is {\bf proximal} is every pair $(x,x') \in X \times X$ is proximal.
A flow $(X,G)$ is called {\bf {strongly proximal}}  
if  for every Borel probability measure $\nu$ on $X$, there is a net $g_i \in G$
and a point $x \in X$ such that 
$$
w^*{\text{-}}\lim (g_i)_*(\nu) \to \delta_x.
$$
The group $G$ admits a {\bf universal minimal strongly proximal flow},
denoted by $(\Pi_s(G),G)$. It is unique in the sense that the identity map
is the only endomorphism it admits (see \cite[Chapter III]{Gl-76}).
Thus, every minimal strongly proximal
flow  is a factor of the system $(\Pi_s(G),G)$.

Recall that a {\bf homomorphism} (or an {\bf extension} or a{\bf factor map}) of flows $\pi :(X,G) \to (Y,G)$
 is a continuous surjection that intertwines the actions on $X$ and $Y$. 
 We say that the extension $\pi$ is {\bf strongly proximal}
if for every $y \in Y$ and every probability measure $\rho$ supported on $\pi^{-1}(y)$,
there is a net $g_i \in G$ such that the net $g_i \rho$ converges to a point mass. 
The extension $\pi$ has a {\bf relatively invariant measure} (RIM for short)  if
there is a {\bf continuous} map $y \mapsto \la_y \in  M(X)$, 
the compact space of probability measures on $X$,
which is equivariant (i.e. $g\la_y = \la_{gy}$ for every $g \in G$ and $y \in Y$) and
such that $\pi_*(\la_y) = \del_y$, for every $y \in Y$,
see \cite{Gl-75}.

We have the following:

\begin{thm}\label{RIM}\cite[Theorem 4.1]{Gl-75}
Let $\phi : (X,G) \to (Y,G)$ be a homomorphism of minimal flows.
Then there is a canonically defined commutative diagram of minimal flows
\begin{equation}\label{rim}
\xymatrix
{
X \ar[d]_{\phi}  & \tilde{X} = X \vee \tilde{Y} \ar[l]_-{\tilde{\tet}} \ar[d]^{\tilde{\phi}} \\
Y & \tilde{Y}\ar[l]^{\theta},
}
\end{equation}
where $\tet$ and $\tilde{\tet}$ are strongly proximal extensions, and $\tilde{\phi}$ has a RIM.
\end{thm}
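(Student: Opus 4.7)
The strategy is to construct $\tilde{Y}$ from a minimal subsystem of the bundle of fiber measures over $Y$, and then obtain $\tilde{X}$ as a fiber join that carries the RIM tautologically. The key point is that a minimal flow inside $M(X)$ whose points project to point masses on $Y$ simultaneously provides a factor of $Y$ (giving $\tilde{Y}$) and a family of fiber measures (giving the RIM).

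First, I would consider
\[
M^\phi(X) := \{\lambda \in M(X) : \phi_*\lambda = \delta_{y(\lambda)} \text{ for some } y(\lambda) \in Y\},
\]
a closed $G$-invariant subset of the compact space $M(X)$ of probability measures on $X$, equipped with a continuous $G$-equivariant projection onto $Y$. I would then take $\tilde{Y}$ to be a minimal subflow of $M^\phi(X)$ (which exists by Zorn's lemma, and surjects onto $Y$ by minimality of $Y$); let $\theta:\tilde{Y}\to Y$ be the restricted projection. Each point $\lambda\in\tilde{Y}$ is then tautologically a probability measure on the fiber $\phi^{-1}(\theta(\lambda))$. For $\tilde{X}$ I would take a minimal subflow of the closed $G$-invariant fiber product
\[
Z := \{(x,\lambda) \in X\times\tilde{Y} : \phi(x)=\theta(\lambda)\}.
\]
Any minimal subflow of $Z$ surjects onto both $X$ and $\tilde{Y}$ by their minimality, and the coordinate projections yield $\tilde{\theta}$ and $\tilde{\phi}$ completing the commutative square.

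The verifications split into three. The RIM for $\tilde{\phi}$ comes from the tautological identification: the map $\lambda\mapsto\lambda\otimes\delta_\lambda$, viewed as a probability measure on $\tilde{X}$ supported over the fiber $\tilde{\phi}^{-1}(\lambda)$, is continuous, $G$-equivariant, and projects to $\delta_\lambda$ under $\tilde{\phi}_*$. Strong proximality of $\theta$ uses the barycenter map $\beta:M(\tilde{Y})\to M^\phi(X)$, $\beta(\rho):=\int\lambda\,d\rho(\lambda)$, which is continuous, affine, $G$-equivariant, and restricts to the inclusion on $\tilde{Y}\hookrightarrow M(\tilde{Y})$ (via Dirac masses). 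For $\rho\in M(\tilde{Y})$ with $\theta_*\rho=\delta_y$, a minimal subset of the orbit closure $\overline{G\rho}$ transports via $\beta$ to a minimal subset of $M^\phi(X)$; choosing $\tilde{Y}$ to be fiberwise extremal forces this minimal subset to consist of point masses, giving strong proximality of $\theta$. Strong proximality of $\tilde{\theta}$ then follows from that of $\theta$ by disintegration along the RIM of $\tilde{\phi}$.

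The main obstacle is pinning down the choice of $\tilde{Y}$ so that $\theta$ is actually strongly proximal, because not every minimal subflow of $M^\phi(X)$ has fibers that are extremal in the ambient convex bundle. The resolution is likely either an iterated construction, replacing $M^\phi(X)$ by $M^\phi(\tilde{Y})$, $M^\phi(M^\phi(\tilde{Y}))$, and so on, and passing to a fixed point where fibers collapse to extreme points, or a direct characterization of the canonical $\tilde{Y}$ via a Furstenberg-type boundary construction relative to the extension $\phi$. This same canonicity is what ensures the diagram in the statement is canonically defined.
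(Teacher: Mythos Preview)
The paper does not prove this theorem; it is quoted from \cite[Theorem~4.1]{Gl-75} and used as a black box, so there is no in-paper proof to compare against. Your strategy---realizing $\tilde{Y}$ as a minimal subflow of the fiber-measure space $M^\phi(X)$ and reading off the RIM tautologically---is indeed the route taken in Glasner's original argument, but two of your steps do not go through as written.

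First, the obstacle you flag is a real gap, not a technicality: an \emph{arbitrary} minimal subflow of $M^\phi(X)$ need not make $\theta$ strongly proximal. Take $G=\Z$, $Y$ a point, and $X$ an irrational circle rotation; then $M^\phi(X)=M(X)$ contains both the singleton $\{m\}$ (Lebesgue) and the copy of $X$ sitting as point masses, each minimal. Choosing $\tilde{Y}=X$ gives $\theta:X\to\{\mathrm{pt}\}$, which is certainly not strongly proximal. Your phrase ``choosing $\tilde{Y}$ to be fiberwise extremal'' does not single out the right object (in the example it would naively select the wrong $\tilde Y$), and the resolution in \cite{Gl-75} is not an iteration but a specific canonical choice coming from the Ellis enveloping-semigroup (``circle operation'') machinery: one pushes through a fixed minimal idempotent, which is what makes the diagram canonical.

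Second, your RIM does not land where you claim. You let $\tilde{X}$ be \emph{some} minimal subflow of the fiber product $Z$, but the tautological section $\lambda\mapsto\lambda\otimes\delta_\lambda$ has support $\operatorname{supp}(\lambda)\times\{\lambda\}\subset Z$, which need not lie in the particular minimal set you picked. The correct $\tilde{X}$ is dictated by the supports of this section, and its minimality must then be argued---it does not come for free from Zorn's lemma.
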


\begin{defn}\label{contr}
 Let $(X,G)$ be a flow and $\nu$ a probability measure on $X$.
 We say that $\nu$ is {\bf contractible} if the set $\{\del_x : x \in X\}$ is contained in the orbit closure
 of $\nu$ in $M(X)$.
 \end{defn}

\begin{rmk}
In the sequel the word ``flow" will always mean a compact topological action, whereas
the word ``system" will refer to measure actions which may or may not be also flows.
It is well known and not hard to see that every, say quasi-invariant system $(X,\nu, G)$ admits a flow model, i.e. 
there is a flow $(Y,G)$ and a probability measure $\rho$ on $Y$ such that the systems
$(X,\nu, G)$ and $(Y,\rho,G)$ are measure isomorphic.
\end{rmk}

\br

\subsection{$\mu$-stationary dynamical systems and the measurable Poisson boundary}

The theory of $\mu$-harmonic functions and the
corresponding Poisson boundary was developed by Harry Furstenberg in several pioneering works
(see e.g. \cite{Fu-63}, \cite{Fu-71}, \cite{Fu-73}).

Let  $\mu$ be an {\bf admissible}  measure on $G$;
i.e. a probability measure with the following two properties: 
(i) For some $k \ge 1$ the convolution power 
$\mu^{*k}$ is absolutely continuous with respect to Haar measure,
(ii) the smallest closed subgroup containing $\supp(\mu)$
is all of $G$.

Let $(X,\Bcal)$ be a standard Borel space and let
$G$ act on it in a measurable way. A probability measure $\nu$ on
$X$ is called {\bf $\mu$-stationary}, or just stationary when $\mu$ is
understood, if $\mu*\nu=\nu$. 
As shown by Nevo and Zimmer \cite{NZ-99}, every $\mu$-stationary
probability measure $\nu$ on a $G$-space $X$ is quasi-invariant;
i.e. for every $g\in G$, $\nu$ and $g\nu$ have the same null sets.
Given a stationary measure $\nu$ the
triple  $\Xcal =(X,\nu,G)$ is called a {\bf
$\mu$-stationary dynamical system}, or just a $\mu$-{\bf system\/}. 
%(Usually we omit the
%$\sig$-algebra $\Bcal$ from the notation of an $m$-system, and
%often also the group $G$ and the measure $\mu$). 

\begin{rmk}\label{MK}
By the Markov-Kakutani fixed point theorem, every flow $(X,G)$ admits at least one
$\mu$-stationary probability measure.
\end{rmk}

\br

Let $\Omega=G^\N$ and let $P=\mu^\N=\mu\times \mu \times \mu \dots$ be the
product measure on $\Omega$. We let $\xi_n:\Om\to G$, denote the
projection onto the $n$-th coordinate,\ $\ n=1,2,\dots$. 
The stochastic process $(\Om,P,\{\eta_n\}_{n\in\N})$, where
$\eta_n=\xi_1\xi_2\cdots\xi_n$,  is called the {\bf $\mu$-random walk on $G$}.
By the martingale convergence theorem we have the existence for $P$
almost all $\om\in\Om$ of the limits
\begin{equation*}
\lim_{n\to\infty}\eta_n\nu=\nu_\om,
\end{equation*}
which are called the {\bf conditional measures} of the $\mu$-system $\Xcal $. 

The map $\zeta : \Om \to M(X)$ defined a.s. by $\om \mapsto \nu_\om$,
sends the measure $P$ onto a $\mu$-stationary probability measure $\zeta_*P = P^*$ on $M(X)$.
We denote the resulting $\mu$-systems by $(\Pi(\Xcal), P^*, G)$.
This is a {\bf quasifactor} of the system $\Xcal$ (for more details on quasifactors see e.g. \cite{Gl-03})
meaning that the measure $P^*$ satisfies the
{\bf barycenter} equation:
$$
\int \nu_\om \, d P(\om) = \nu.
$$

\br

\begin{defn}
Let $\pi: (X,\nu,G) \to (Y, \eta,G)$ be a homomorphism of
$\mu$-dynamical systems.
\begin{itemize}
\item
The homomorphism $\pi$ is  {\bf measure
preserving} (or extension) if for every $g\in G$ we
have $g\nu_y=\nu_{gy}$ for $\eta$  almost all $y$. Here the
probability measures $\nu_y\in M(X)$ are those given by the
disintegration $\nu=\int \nu_y d\eta(y)$.  
It is easy to see that
when $\pi$ is a measure preserving extension then also (with
obvious notations),  $P$ a.s. $g(\nu_\om)_y= (\nu_\om)_{gy}$
 for $\eta$  almost all $y$. 
 Clearly, when $\Ycal $ is the
trivial system, the extension $\pi$ is measure preserving iff the
system $\Xcal $ is measure preserving.
\item
The $\mu$-system $\Xcal =(X,G,\nu)$  is
{\bf $\mu$-proximal} (or a ``boundary" in the terminology of \cite{Fu-71})
if $P$ a.s. the conditional measures $\nu_\om\in M(X)$ are point
masses.  Note that for every $\mu$-system $\Xcal =(X,G,\nu)$ the corresponding 
quasifactor $(\Pi(\Xcal), P^*, G)$ is $\mu$-proximal.
\item
The homomorphism (or extension) $\pi$ is {\bf $\mu$-proximal}  if $P$ a.s. the
extension $\pi:(X,\nu_\om)\to (Y,\eta_\om)$ is a.s. one-to-one, where
$\eta_\om$ are the conditional measures for the system $\Ycal =(Y,\eta,G)$.
Clearly, when $\Ycal $ is the trivial system, the extension $\pi$
is $\mu$-proximal iff the system $\Xcal $ is $\mu$-proximal.
%When
%there is no room for confusion we sometimes say proximal rather
%than $\mu$-proximal.
\item
A $\mu$-system $(X,\nu)$  is called {\bf standard} if there exists
a homomorphism $\pi:(X,\nu)\to(Y,\eta)$, with $(Y,\eta)$ $\mu$-proximal and
the homomorphism $\pi$ a measure preserving extension.
\item
A bounded measurable complex valued function $f : G \to \C$ 
satisfying the equation $\mu *f =f$ is called  $\mu$-{\bf harmonic}.
\end{itemize}
\end{defn}

Given the group $G$ and the probability measure $\mu$, there exists a
unique (up to measurable isomorphisms)  universal $\mu$-proximal system $(\Pi(G,\mu),\eta)$ called the
{\bf universal measurable Poisson $\mu$-boundary} of the pair $(G,\mu)$. Thus, every $\mu$-proximal
system $(X,\mu)$ is a factor of the system $(\Pi(G,\mu),\eta)$.

\br 

It is easy to check that given a $\mu$-stationary system $(X,G,\nu)$ and a bounded measurable
function $F : X \to \C$, the function
$$
f(g) = \int F(gx)\, d \nu(x),
$$
is measurable bounded $\mu$-harmonic function on $G$.
%; i.e. $f$ satisfies the equation $\mu*f =f$.
When $F$ is continuous the corresponding $f$ is in addition left uniformly continuous on $G$.
The universal measurable Poisson $\mu$-boundary $(\Pi(G,\mu),\eta)$ is characterized by the requirement that
the correspondence $F \mapsto f$ is a bounded linear surjection from $L^\infty(\Pi(G,\mu),\eta)$ onto
the space of measurable bounded $\mu$-harmonic functions on $G$.

The next structure theorem is proved in \cite[Theorem 4.3]{FG-10}.

\begin{thm}[Structure theorem for $\mu$-systems]\label{4.3}
Let
$\Xcal=(X,\nu,G)$ be a $\mu$-system, then there exist canonically
defined $\mu$-systems $\Pi(\Xcal)=(M,P^*)$ and  $\Xcal^* = \Xcal \vee \Pi(\Xcal)$ 
%with $\Pi(\Xcal)$  $\mu$-proximal, and  $\Xcal^*$ standard, 
with $\Xcal^*$ standard, as in the following diagram:
\begin{equation*}
\xymatrix
{
& \Xcal^* = \Xcal \vee \Pi(\Xcal) \ar[dl]_{\pi} \ar[dr]^{\sig}  &  \\
\Xcal &  & \Pi(\Xcal).
}
\end{equation*}
Here $\pi$ is a $\mu$-proximal extension, and $\sig$ is a measure
preserving extension. Thus every $\mu$-system admits a $\mu$-proximal
extension which is standard. The $\mu$-system $\Xcal$ is measure
preserving iff $\Pi(\Xcal)$ is trivial. The $\mu$-system $\Xcal$
is $\mu$-proximal iff both $\pi$ and $\sig$ are isomorphisms. 
%Moreover this structure is unique; i.e. any two such diagrams are isomorphic.
%We call $\Xcal^*$ the {\bf standard cover} of $\Xcal$.
\end{thm}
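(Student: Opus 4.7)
The plan is to build everything around the conditional-measure map $\zeta:\Omega\to M(X)$, $\omega\mapsto\nu_\omega$, already introduced in the excerpt. First I would set $\Pi(\mathcal{X}):=(M(X),P^\ast)$ where $P^\ast=\zeta_\ast P$, and verify that $P^\ast$ is $\mu$-stationary and $\mu$-proximal. Stationarity comes from $G$-equivariance of $\omega\mapsto\nu_\omega$ (under the shift on $\Omega$, $\nu_{g\omega}=g\nu_\omega$ in the appropriate sense) together with $\mu^{\ast k}=\mu\ast\mu^{\ast(k-1)}$; concretely, pairing with a test function and using $\mu\ast\nu=\nu$ and dominated convergence gives $\mu\ast P^\ast=P^\ast$. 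For $\mu$-proximality I would invoke the martingale ``tower" identity $(\nu_\omega)_{\omega'}=\nu_{\omega'\omega}$ (applying the random walk a second time), which yields that the conditional measures of $P^\ast$ on $M(X)$ are $P$-a.s.\ the Dirac masses $\delta_{\nu_\omega}$.

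Next, I would construct the join $\mathcal{X}^\ast$ as the measure
\[
\tilde\nu=\int_\Omega (\nu_\omega\times\delta_{\nu_\omega})\,dP(\omega)
\]
on $X\times M(X)$, with $\pi$ and $\sigma$ the coordinate projections. By the barycenter equation $\pi_\ast\tilde\nu=\nu$ and $\sigma_\ast\tilde\nu=P^\ast$, and $\tilde\nu$ is $\mu$-stationary since it is a $G$-equivariant lift. To see that $\sigma$ is measure preserving, note that its conditional measure at $\xi=\nu_\omega\in M(X)$ is exactly $\xi\times\delta_\xi$, and the equivariance $g(\xi\times\delta_\xi)=g\xi\times\delta_{g\xi}$ is immediate. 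To see that $\pi$ is $\mu$-proximal, unravel the conditional measures $\tilde\nu_\omega=\nu_\omega\times\delta_{\nu_\omega}$: the $M(X)$-coordinate is a point mass, so the fibers $\pi^{-1}(x)\cap\supp\tilde\nu_\omega$ are $P$-a.s.\ singletons, and the extension $(X\times M(X),\tilde\nu_\omega)\to(X,\nu_\omega)$ is $P$-a.s.\ one-to-one. In particular $\mathcal{X}^\ast$ is standard, being a measure-preserving extension of the $\mu$-proximal system $\Pi(\mathcal{X})$.

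For the two ``iff" statements I would argue as follows. If $\nu$ is $G$-invariant, then a.s.\ $\eta_n\nu=\nu$ for every $n$, whence $\nu_\omega=\nu$ and $P^\ast=\delta_\nu$ is trivial. Conversely, if $\Pi(\mathcal{X})$ is trivial, then $\omega\mapsto\nu_\omega$ is essentially constant, and by the barycenter equation that constant value is $\nu$; then $\xi_1\nu=\nu_{\xi_1\cdot}=\nu$ a.s., so $g\nu=\nu$ for $\mu$-a.e.\ $g$, and admissibility of $\mu$ propagates this to all of $G$. For the last statement, $\mathcal{X}$ $\mu$-proximal means the $\nu_\omega$ are a.s.\ Dirac masses, i.e.\ $\zeta$ factors through the canonical embedding $X\hookrightarrow M(X)$, $x\mapsto\delta_x$, so $\Pi(\mathcal{X})\cong\mathcal{X}$ and both $\pi$ and $\sigma$ identify with the identity on (a modification of) $\mathcal{X}$; the converse is trivial since $\Pi(\mathcal{X})$ is always $\mu$-proximal.

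The main obstacle, I expect, is the bookkeeping around the two different random walks needed to establish $\mu$-proximality of $\Pi(\mathcal{X})$ and of the extension $\pi$: one must pass from ``compute the conditional measure of $\nu$'' to ``compute the conditional measure of $P^\ast$ and of $\tilde\nu$'' and check that the two martingale limits interact correctly (the identity $(\nu_\omega)_{\omega'}=\nu_{\omega'\omega}$ is the workhorse). The other delicate point is the existence of the canonical disintegration $\nu=\int\nu_y\,d\eta(y)$ in the definition of ``measure preserving extension'' over an abstract Borel base $Y$; this requires the standard Borel hypothesis on $Y$ and a regularity argument, but once in place the equivariance check reduces to the algebraic identity displayed above.
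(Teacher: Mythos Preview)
The paper does not actually prove this theorem: it is quoted from \cite[Theorem 4.3]{FG-10} and stated in the introduction without argument. So there is no ``paper's own proof'' to compare against.

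That said, your sketch follows exactly the construction the paper already lays out in the paragraphs preceding the theorem (the map $\zeta:\omega\mapsto\nu_\omega$, the quasifactor $(\Pi(\mathcal{X}),P^\ast)$, the barycenter equation, and the remark that $\Pi(\mathcal{X})$ is always $\mu$-proximal), and it is the standard argument one finds in \cite{FG-10}. The join measure $\tilde\nu=\int(\nu_\omega\times\delta_{\nu_\omega})\,dP(\omega)$ is the right object, and your identifications of the conditional measures for $\sigma$ and of $\tilde\nu_\omega$ for $\pi$ are correct. The two ``iff'' clauses are handled correctly as well.

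The one place where your write-up is a bit loose is the tower identity $(\nu_\omega)_{\omega'}=\nu_{\omega'\omega}$ used for the $\mu$-proximality of $\Pi(\mathcal{X})$: as written it needs the statement that, for $P$-a.e.\ $\omega'$, the measures $\nu_{(\xi_1(\omega'),\dots,\xi_n(\omega'),\xi_1(\omega),\dots)}$ converge to $\nu_{\omega'}$ for $P$-a.e.\ $\omega$, which is a tail-type assertion that deserves a line of justification. A cleaner route (and the one implicit in the paper's remark that $\Pi(\mathcal{X})$ is $\mu$-proximal) is to observe that $\omega\mapsto\nu_\omega$ is equivariant and factors through the universal Poisson boundary $(\Pi(G,\mu),\eta)$, so $\Pi(\mathcal{X})$ is a factor of a $\mu$-proximal system and hence $\mu$-proximal. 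Either way, you have correctly identified this as the delicate step.
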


\br

\subsection{The topological universal Poisson $\mu$-boundary}

If one wants to realize all the bounded left uniformly continuous $\mu$-harmonic functions on $G$ in a single compact space
this usually comes with a price, this compact space may no longer be metrizable.

Let $\Lcal$ denote the $C^*$-algebra of bounded  left uniformly continuous complex valued functions on $G$.
We let $(B, \nu, G)$ be the 
{\bf topological universal Poisson $\mu$-boundary} corresponding to the pair $(G,\mu)$,
as in \cite[Chapter V.4, pages 63--67]{Gl-76}. 
Thus $(B,G) = (B_\mu,G)$ is a {\bf flow}, $\nu$ is a $\mu$-stationary probability measure on $B$
%(i.e. $\mu *\nu = \nu$) 
and the operator $L_\nu : C(B) \to \Lcal$, defined by
\begin{equation}\label{harmonic}
L_\nu F (g) = \int F(gx) \, d\nu(x),
\end{equation}
is an isometric isomorphism from $C(B)$ onto $\Hcal$,
where $\Hcal \subset \Lcal$ is the closed subspace comprising the
$\mu$-harmonic functions (i.e. those functions $f \in \Lcal$ satisfying the equation
$\mu * f = f$).

%The group $G$ admits a  universal minimal strongly proximal flow,
%denoted by $(\Pi_s(G),G)$. It is unique in the sense that the identity map
%is the only endomorphism it admits (see \cite[Chapter III]{Gl-76}).

\br

\section{The main theorem}\label{sec:main}

Let $G$ be a locally compact second countable group and $\mu$ an admissible measure on $G$.
Let $(B, \nu) = (B_\mu,\nu)$ be the corresponding topological universal Poisson $\mu$-boundary.
From Theorem 4.4 in Chapter V of \cite{Gl-76} we have:

%\begin{thm}\label{factor}
%For every $\mu$-stationary flow $(X,\la, G)$ 
%(i.e. a flow $(X,G)$ with a distinguished $\mu$-stationary measure $\la$) there is a continuous homomorphism
%$\phi : B \to M(X)$ such that $\phi(B) \supset X$ and $\phi_*(\nu) = \la$.
%\end{thm}

\begin{thm}\label{factor}
Let  $(X,\la, G)$ be a $\mu$-stationary flow
(i.e. a flow $(X,G)$ with a distinguished $\mu$-stationary measure $\la$).
If the $\mu$-stationary measure $\la$ is contractible then there is a continuous homomorphism
$\phi : B \to M(X)$ such that $\phi(B) \supset X$ and $\phi_*(\nu) = \la$.
\end{thm}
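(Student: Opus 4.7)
The plan is to build $\phi$ by duality from the universal property of $(B,\nu)$, and then to extract preimages of individual points of $X$ by combining contractibility of $\la$ with extremality of point masses in $M(X)$. For each $F\in C(X)$ set
$$h_F(g) = \int_X F(gx)\, d\la(x).$$
Since $\la$ is $\mu$-stationary and the action on the compact flow $X$ is jointly continuous, $h_F$ is a bounded, left uniformly continuous, $\mu$-harmonic function on $G$, i.e.\ $h_F \in \Hcal$. The isometric isomorphism $L_\nu : C(B) \to \Hcal$ therefore produces a unique $\tilde F \in C(B)$ with $L_\nu \tilde F = h_F$. The assignment $F \mapsto \tilde F$ is linear, unital, $G$-equivariant, and positive (positivity being a standard structural property of the Poisson-integral representation, traceable to the fact that $\nu$ is itself contractible on $B$). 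The Riesz representation theorem then delivers a weak-$*$ continuous $\phi : B \to M(X)$ defined by
$$\tilde F(b) = \int_X F\, d\phi(b), \qquad F \in C(X),\ b \in B.$$

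Equivariance of $\phi$ is a direct unwinding of the $G$-equivariance of $F \mapsto \tilde F$. For the relation $\phi_*\nu = \la$, read as the barycenter identity $\int_B \phi(b)\, d\nu(b) = \la$, integrate the defining formula against $\nu$:
$$\int_B \tilde F(b)\, d\nu(b) = (L_\nu \tilde F)(e) = h_F(e) = \int_X F\, d\la.$$

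To establish $X \subset \phi(B)$, extend $\phi$ affinely to the continuous, $G$-equivariant map $\Phi : M(B) \to M(X)$, $\Phi(\rho) = \int_B \phi(b)\, d\rho(b)$, which by the previous step satisfies $\Phi(\nu) = \la$. For a given $x \in X$, contractibility of $\la$ yields a net $g_i \in G$ with $g_i \la \to \del_x$ in $M(X)$. Then $\Phi(g_i \nu) = g_i \la \to \del_x$, and by weak-$*$ compactness of $M(B)$ some subnet satisfies $g_i \nu \to \rho$ for a probability $\rho \in M(B)$, so $\Phi(\rho) = \del_x$. Because $\del_x$ is an extreme point of $M(X)$, the identity $\int_B \phi(b)\, d\rho(b) = \del_x$ forces $\phi(b) = \del_x$ for $\rho$-almost every $b$, and any such $b$ exhibits $x \in \phi(B)$ under the canonical embedding $x \mapsto \del_x$. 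The delicate point I anticipate as the main obstacle is the positivity of $F \mapsto \tilde F$ in the construction, which is required so that each $\phi(b)$ is genuinely a probability measure rather than merely a norm-one functional on $C(X)$; once that is in hand, the remaining verifications and the final extremality argument are essentially formal.
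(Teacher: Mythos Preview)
The paper does not supply its own proof of this theorem; it simply invokes Theorem~4.4 in Chapter~V of \cite{Gl-76}. Your argument is the standard one and is in the spirit of that reference: manufacture a unital positive $G$-map $C(X)\to C(B)$ as the composite $L_\nu^{-1}\circ L_\la$, dualize via Riesz to get $\phi:B\to M(X)$, and then use contractibility plus extremality of point masses to catch every $x\in X$ in $\phi(B)$. Your reading of ``$\phi_*(\nu)=\la$'' as the barycenter identity is the correct one in this context.

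The only step that is not fully written out is the one you yourself flag, positivity of $F\mapsto\tilde F$, and your hint that it is ``traceable to the fact that $\nu$ is itself contractible on $B$'' is exactly right; it is worth spelling out. First, $\supp(\nu)=B$: since $\mu$ is admissible, $\nu$ is quasi-invariant, so $\supp(\nu)$ is $G$-invariant; any nonzero $H\in C(B)$ vanishing on $\supp(\nu)$ would then satisfy $L_\nu H\equiv 0$, contradicting injectivity of $L_\nu$. Second, $(B,\nu)$ is a $\mu$-boundary, so $P$-a.s.\ $\eta_n\nu\to\del_{b_\om}$; hence $\{\del_b:b\in\supp(\nu)\}=\{\del_b:b\in B\}\subset\ol{G\nu}$, i.e.\ $\nu$ is contractible on $B$. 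Now fix $b\in B$ and pick $g_i\in G$ with $g_i\nu\to\del_b$; then for $F\ge 0$,
\[
\tilde F(b)=\lim_i\int_B\tilde F(g_i\cdot)\,d\nu=\lim_i(L_\nu\tilde F)(g_i)=\lim_i h_F(g_i)\ge 0,
\]
which is the missing positivity. With this in hand, the Riesz step, the barycenter computation, and the extremality argument for $X\subset\phi(B)$ go through exactly as you wrote.
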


And, the following:

\begin{cor}\label{cor}
The universal minimal strongly proximal flow $\Pi_s(G)$ is a factor of every minimal subset of
the universal Poisson $\mu$-boundary  $(B, \nu)$.
\end{cor}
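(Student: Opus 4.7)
The plan is to apply Theorem~\ref{factor} directly with $X = \Pi_s(G)$ itself, and then use strong proximality of $\Pi_s(G)$ to control the minimal subflows of $M(\Pi_s(G))$. Let $Y \subseteq B$ be an arbitrary minimal subflow. By the Markov--Kakutani fixed point theorem (Remark~\ref{MK}), the flow $(\Pi_s(G),G)$ carries at least one $\mu$-stationary probability measure $\lambda$. To feed this into Theorem~\ref{factor} I must verify that $\lambda$ is contractible in the sense of Definition~\ref{contr}.

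Contractibility is essentially automatic from strong proximality combined with minimality of $\Pi_s(G)$. By strong proximality there is a net $g_i \in G$ and a point $x \in \Pi_s(G)$ with $g_i \lambda \to \delta_x$ in $M(\Pi_s(G))$, so $\delta_x$ lies in the orbit closure of $\lambda$. Applying any $g \in G$ we find $\delta_{gx}$ in this orbit closure too. Since $y \mapsto \delta_y$ is a homeomorphism of $\Pi_s(G)$ onto its image in $M(\Pi_s(G))$, and $\Pi_s(G)$ is minimal, the closure of $\{\delta_{gx} : g \in G\}$ is the full Dirac set $\{\delta_y : y \in \Pi_s(G)\}$, establishing contractibility. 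Theorem~\ref{factor} then yields a continuous $G$-equivariant map $\phi : B \to M(\Pi_s(G))$ whose image contains $\Pi_s(G)$, identified with the Dirac subset of $M(\Pi_s(G))$.

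The last step is the observation that the only minimal subflow of $M(\Pi_s(G))$ is $\{\delta_y : y \in \Pi_s(G)\}$, which is canonically isomorphic to $\Pi_s(G)$. Indeed, if $K \subseteq M(\Pi_s(G))$ is minimal and $\rho \in K$, strong proximality of $\Pi_s(G)$ contracts some $g_i \rho$ to a Dirac mass $\delta_x \in K$, and minimality together with the computation in the previous paragraph forces $K = \{\delta_y : y \in \Pi_s(G)\}$. Since $Y$ is minimal, $\phi(Y)$ is a minimal subflow of $M(\Pi_s(G))$, hence equals the Dirac copy of $\Pi_s(G)$; thus $\phi|_Y : Y \to \Pi_s(G)$ is the desired factor map. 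The only point requiring care is that $\Pi_s(G)$ need not be metrizable even when $G$ is LCSC, but Theorem~\ref{factor} and the compact-convex apparatus of \cite[Chapter V]{Gl-76} are developed for general compact Hausdorff flows, so this is not a genuine obstacle.
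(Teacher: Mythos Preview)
Your proof is correct and follows essentially the same route as the paper's: pick a $\mu$-stationary measure on $\Pi_s(G)$ via Markov--Kakutani, observe it is contractible by minimality plus strong proximality, and invoke Theorem~\ref{factor}. The paper stops at ``now apply Theorem~\ref{factor}'', leaving implicit the step you spell out carefully---that the Dirac copy of $\Pi_s(G)$ is the \emph{unique} minimal subflow of $M(\Pi_s(G))$, so that any minimal $Y\subset B$ is forced by $\phi$ onto $\Pi_s(G)$---but this is exactly the intended reading, and your added detail is correct.
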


\begin{proof}
As pointed out in Remark \ref{MK}, the flow $(\Pi_s(G),G)$ admits a $\mu$-stationary measure,
say $\la$. Since the flow $(\Pi_s(G), G)$ is minimal and strongly proximal it follows that $\la$ is
contractible.
Now apply Theorem \ref{factor} to the $\mu$-stationary system $(\Pi_s(G), \la,G)$.
\end{proof}

%
%\begin{thm}
%The universal minimal strongly proximal flow $\Pi_s(G)$ is a factor of every minimal subset of
%the universal Poisson $\mu$-boundary  $(B, \nu)$.
%\end{thm}
%
A (measure theoretical) $\mu$-stationary system $(X, \la, G)$ is {\em $\mu$-mean proximal} if it 
is $\mu$-proximal and it admits a topological 
model (still denoted $(X, \la, G)$) where $\la$ is the {\bf unique} $\mu$-stationary measure on $X$
(see \cite[Theorem 14.1] {Fu-73} and \cite[Theorem 8.5]{GW-16}).

\br

%The following theorem is a strengthening of \cite[Theorem 3.11]{HK-23}.
%The following theorem extends Theorem 3.11 of \cite{HK-23} to the case where $G$ is LCSC..

\begin{thm}\label{equiv}
The following conditions are equivalent:
\begin{enumerate}
\item
The measure $\nu$ is the unique $\mu$-stationary measure on $B$.
%\item
%The flow $(B, G)$ is minimal.
\item
The map $\phi : B \to \Pi_s(G)$ is a flow isomorphism.
\item
Every stationary $\mu$-proximal system $(X, \la, G)$ is $\mu$-mean proximal.
\end{enumerate}
\end{thm}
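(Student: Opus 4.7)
The strategy is to prove $(1)\Leftrightarrow(2)$ and $(1)\Leftrightarrow(3)$. Every implication leans on Theorem~\ref{factor}, which for any contractible $\mu$-stationary $(X,\lambda,G)$ yields a continuous equivariant $\phi:B\to M(X)$ with $\phi(B)\supseteq X$ and $\phi_*\nu=\lambda$ in the barycentric sense, together with the Markov--Kakutani fixed point theorem applied to the $\mu$-convolution operator.

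For $(1)\Rightarrow(2)$, I first observe $\supp(\nu)=B$: if $F\in C(B)$ vanished on the $G$-invariant closed set $\supp(\nu)$, then $L_\nu F\equiv 0$, contradicting injectivity of the Poisson transform. Next, $B$ is minimal, because every minimal subflow $M\subseteq B$ carries a stationary measure (Markov--Kakutani) which by (1) must be $\nu$, forcing $\supp(\nu)\subseteq M$ and thus $M=B$. Applying Theorem~\ref{factor} to $(\Pi_s(G),\lambda_0)$ (with $\lambda_0$ stationary, contractible by strong proximality) yields $\phi:B\to M(\Pi_s(G))$; since $B$ is minimal, $\phi(B)$ is a minimal subflow of $M(\Pi_s(G))$ containing the minimal $\Pi_s(G)$, hence $\phi(B)=\Pi_s(G)$. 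To upgrade $\phi$ to an isomorphism I show $B$ is strongly proximal: for $\rho\in M(B)$, Markov--Kakutani on the closed convex hull of $\overline{G\rho}$ produces a stationary measure, which by (1) is $\nu$; the $\mu$-proximality of $(B,\nu)$ then contracts $\nu$ along the random walk, and a diagonal approximation contracts $\rho$ itself. The universal property of $\Pi_s(G)$ then supplies a factor $\beta:\Pi_s(G)\to B$, and $\phi\circ\beta$ being the identity endomorphism of $\Pi_s(G)$ forces $\beta$ to be a continuous bijection of compact Hausdorff spaces, hence a homeomorphism, with $\phi=\beta^{-1}$. The converse $(2)\Rightarrow(1)$ is an averaging argument on $\Pi_s(G)$: strong proximality plus martingale convergence make every stationary measure $\mu$-proximal, and the average of two stationary measures is again stationary and hence $\mu$-proximal, which forces the random-walk point masses and therefore the measures themselves to agree.

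For $(1)\Rightarrow(3)$, let $(X,\lambda,G)$ be $\mu$-proximal, realized as a flow. Since $\mu$-proximality gives contractibility, Theorem~\ref{factor} produces $\phi_X:B\to M(X)$ with $\phi_X(B)\supseteq X$ and barycenter $\lambda$. Set $Y=\phi_X(B)$ and $\eta=\phi_{X*}\nu$. For any stationary $\rho$ on $Y$, the weak-$*$ compact convex set $\{\tilde\rho\in M(B):\phi_{X*}\tilde\rho=\rho\}$ is preserved by $\mu\ast(\cdot)$ and so contains a stationary $\tilde\rho$ by Markov--Kakutani; (1) forces $\tilde\rho=\nu$ and $\rho=\eta$, so $\eta$ is the unique stationary measure on $Y$. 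Because $(X,\lambda)$ is $\mu$-proximal, $\phi_X(b_\omega)=\lambda_\omega$ lands a.s.\ as a point mass in the embedded $X\subseteq M(X)$, so $\eta$ is supported on $X\subseteq Y$ and equals $\lambda$ there. Thus $(Y,\eta)$ is a topological model of $(X,\lambda)$ with unique stationary measure, establishing $\mu$-mean proximality.

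For $(3)\Rightarrow(1)$, apply (3) to $(B,\nu)$ itself, obtaining a topological model $(Y,\eta)$ with $\eta$ unique and $(Y,\eta)\cong(B,\nu)$ measurably; the averaging argument of $(2)\Rightarrow(1)$ shows $Y$ is strongly proximal. Applying Theorem~\ref{factor} to $(Y,\eta)$ gives $\psi:B\to M(Y)$ with $\psi(B)\supseteq Y$ and $\psi_*\nu=\eta$; $\mu$-proximality of $\eta$ places $\psi$ $\nu$-a.e.\ in $Y$, and Poisson injectivity then forces $\psi^{-1}(Y)=B$, so $\psi:B\to Y$ is a continuous surjective flow factor. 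The main obstacle is to upgrade $\psi$ to a flow isomorphism, for then any stationary $\nu'$ on $B$ satisfies $\psi_*\nu'=\eta=\psi_*\nu$, giving $\nu'=\nu$. My route is to prove $L_\eta:C(Y)\to\Hcal$ is surjective: any $h\in\Hcal$ is left uniformly continuous on $G$ and factors as $h(g)=H(g\eta)$ through the orbit of $\eta$ in $M(Y)$, and strong proximality of $Y$ places every $\delta_y$ in that orbit closure, yielding, by a continuous extension argument on the closure of $\{g\eta\}$, an $F\in C(Y)$ with $L_\eta F=h$. Given this surjectivity, each $F\in C(B)$ satisfies $L_\nu F=L_\eta\tilde F$ for some $\tilde F\in C(Y)$; injectivity of $L_\nu$ gives $F=\tilde F\circ\psi$ $\nu$-a.e., and since $\supp(\nu)=B$ and both sides are continuous, $F=\tilde F\circ\psi$ on all of $B$. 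Hence $C(B)=\psi^\ast C(Y)$, and Gelfand duality forces $\psi$ to be injective, so $\psi$ is a homeomorphism. The delicate continuous-extension step behind $L_\eta$ being surjective on $C(Y)$ is the expected main technical hurdle.
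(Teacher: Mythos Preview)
Your route for $(1)\Rightarrow(2)$ is genuinely different from the paper's and is correct: once $B$ is shown to be minimal, you argue directly that $B$ is strongly proximal (any $\rho\in M(B)$ has a stationary measure in $\overline{\mathrm{co}}\,\overline{G\rho}$, which must be $\nu$; contractibility of $\nu$ then puts $\delta_x$ in this closed convex $G$-set, and Milman's converse to Krein--Milman places $\delta_x$ in $\overline{G\rho}$). The paper instead applies the RIM structure theorem (Theorem~\ref{RIM}) to $\phi:B\to\Pi_s(G)$, uses maximality of $\Pi_s(G)$ to collapse the diagram so that $\phi$ itself has a RIM $z\mapsto\lambda_z$, forms the stationary measure $\kappa=\int\lambda_z\,d(\phi_*\nu)$, and uses uniqueness together with the structure theorem (Theorem~\ref{4.3}) to force the $\lambda_z$ to be point masses. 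Your argument is more elementary; the paper's argument illustrates how the RIM machinery interacts with the structure theorem.

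For $(2)\Rightarrow(1)$ there is a real gap. The assertion ``strong proximality plus martingale convergence make every stationary measure $\mu$-proximal'' is not justified: strong proximality gives $\delta_x\in\overline{G\eta}$, but it does not tell you that the \emph{random walk} increments $\eta_n(\omega)\eta$ converge to point masses. Without that, your averaging argument never gets started. The paper avoids this entirely: since $\eta$ is contractible (by strong proximality of $B=\Pi_s(G)$), Theorem~\ref{factor} yields a flow map $\psi:B\to B$ with $\psi_*\nu=\eta$; but the only endomorphism of $\Pi_s(G)$ is the identity, so $\eta=\nu$. This is a one-line argument once you remember the rigidity of $\Pi_s(G)$.

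For $(3)\Rightarrow(1)$ you are working far too hard, and the ``continuous extension'' step you flag is indeed problematic: you would need $g_i\eta\to g\eta$ in $M(Y)$ to imply $h(g_i)\to h(g)$, which does not follow from left uniform continuity of $h$ alone. The paper simply says this equivalence is clear from the universality of $(B,\nu,G)$, implicitly relying on the cited characterizations of $\mu$-mean proximality (\cite[Theorem~14.1]{Fu-73}, \cite[Theorem~8.5]{GW-16}), which make it a model-independent notion; once one knows that $\mu$-mean proximality forces uniqueness of the stationary measure in \emph{every} topological model, applying $(3)$ to $(B,\nu)$ itself gives $(1)$ immediately. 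Your $(1)\Rightarrow(3)$ via $Y=\phi_X(B)$ is correct and is essentially what ``universality'' unpacks to in that direction.
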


\begin{proof}
(1) $\imp$ (2):
As every minimal subflow of $B$ admits a $\mu$-stationary measure, our assumption implies
that $B$ contains a unique minimal subflow $M \subset B$ and $\nu$ is supported on $M$. 
It then follows from formula (\ref{harmonic}) that $M= B$.
%Let $S \subset B$ be the closed support of $\nu$. 
%As $\nu$ is quasi-invariant and $G$ is separable, it follows that $S$ is $G$-invariant.
%Let $M \subset B$ be a minimal subflow. If $S \not = M$, then
%$M$ will support a $\mu$-stationary measure other than $\nu$. 
%As this possibility  is ruled out by assumption, we conclude that $S = M$.
%But clearly also $B = S$, so that $S = M = B$.

By Theorem \ref{factor} we have a continuous homomorphism  $\phi : B \to \Pi_s(G)$.
%and by Corollary \ref{cor} 
%hence $\phi(M) = \Pi_s(G)$.
By 
%Theorem 4.1 in \cite{Gl-75} 
Theorem \ref{RIM}, applied to $\phi : B \to \Pi_s(G)$,
and the maximality of $\Pi_s(G)$ as a minimal strongly
proximal flow, the diagram (\ref{rim}) collapses and $\phi = \tilde{\phi}$
admits a RIM.
% (see \cite{Gl-75}).
%It then follows that the system $(B, \nu,G)$ is a standard system
%%, in the sense of \cite{FG-10}
%and has a structure $(B,\nu,G) \overset{\phi}{\to} (\Pi_s(G), \phi_*(\nu),G)$,
%with $\phi$ a measure preserving extension. 
Let $z \mapsto \la_z$ be a RIM, from $\Pi_s(G) \to M(B)$.
Let $\rho = \phi_*(\nu)$ and set
$$
\kappa = \int \la_z \, d \rho(z).
$$
Then $\kappa$ is a $\mu$-stationary measure on $B$ and the $\mu$-stationary
system $(B, \kappa,G)$ is standard with structure
$(B,\kappa,G) \overset{\phi}{\to} (\Pi_s(G), \rho),G)$.
Now, by assumption, $\kappa = \nu$.
As $(B,\nu,G)$ is $\mu$-proximal,
it follows from the structure theorem (Theorem \ref{4.3}) that $\phi$ is both measure 
preserving extension and a $\mu$-proximal extension,
whence, a measure isomorphism.
This means that $\rho$ a.s. $\la_z$ is a point mass. 
%(Theorem 4.3 in \cite{FG-10}). 
Finally, since a RIM is a continuous map
we conclude that $\phi$ is a topological isomorphism.

\br

(2) $\imp$ (1):
By assumption there is a $\mu$-stationary measure $\nu$ on $\Pi_s(G)$ such that
$(\Pi_s(G), \nu,G)$ is the universal Poisson $\mu$-boundary.
Suppose $\eta$ is another  $\mu$-stationary measure on $\Pi_s(G)$. Then,
by universality there is a homomorphism $\psi : (\Pi_s(G). \nu,G) \to (\Pi_s(G),\eta,G)$.
This map $\psi$ is then, in particular, an endomorphism of the flow $(\Pi_s(G),G)$.
However, the only such endomorphism is the identity map
and we conclude that $\eta = \phi_*(\nu) =\nu$.

\br

(3) $\iff$ (1): This is clear from the universality of $(B, \nu, G)$.
\end{proof}

%\begin{defn}
%We say that the group $G$ is  USB if,
%for some generating $\mu$,  the universal Poisson boundary is non  trivial and has a unique $\mu$-stationary measure.
%Equivalently, when $(B,\nu)$ is nontrivial and $B$ is minimal.
%Note that by Theorem \ref{equiv}, as in this case $B = \Pi_s(G)$, such a group is not amenable.
%Moreover, if $G$ acts effectively on $B$ then its amenable radical is trivial.
%\end{defn}

%\begin{defn}
%We say that the group $G$ is  USB if, 
%for some generating $\mu$,  it acts effectively on it
%universal Poisson $\mu$-boundary  $(B, \nu)$, and $\nu$ is its unique $\mu$-stationary measure.
%Equivalently, when $G$ acts effectively on $B$ and the flow $(B,G)$ is minimal.
%Note that by Theorem \ref{equiv}, as in this case $B = \Pi_s(G)$, such a group has a trivial
%amenable radical.
%\end{defn}

\br

\begin{rmk}
The first paragraph in the proof of the implication (1) $\imp$ (2) shows that
for any topological model $(X, \la,G)$ of a $\mu$-mean proximal system the flow $(X,G)$  is minimal.
Let $Q \subset M(X)$ be an irreducible affine sublow of $M(X)$.
By Theorem 2.3 in Chapter III.2 of \cite{Gl-76} the set $Y = \ol{\ext (Q)} \subset Q$ is a minimal strongly  proximal flow
and therefore it admits a $\mu$-stationary measure, say $\kappa$. 
Then, its barycenter, the probability measure
$\rho =  {\rm bary}(\kappa)$, is a stationary measure on $X$. By uniqueness $\rho = \la$.
Since the conditional measures of $\la$ are point masses, we conclude that $Q = M(X)$ and $Y = X$.
This briefly is the proof of Theorem 3.11  in \cite{HK-23}.
\end{rmk}

\br

In \cite{HK-23} the authors say:

\begin{quote}
Many natural examples of Poisson boundaries are in fact USB, namely, the Poisson boundary is being realized as a unique stationary measure on a compact space. The main tool for realizing the Poisson boundary on a compact space is the strip criterion of Kaimanovich \cite{K-00}, which proves, in many cases that the Poisson measure is actually unique. To name some examples (by no mean a complete list!) are linear groups acting on flag varieties \cite{BS-11}, \cite{K-00}, \cite{L-85}, hyperbolic groups acting on the Gromov boundary \cite{K-00}, non- elementary subgroups of mapping class groups acting on the Thurston boundary \cite{KM-96}, and non-elementary subgroups of 
{\rm Out}($F_n)$ 
acting on the boundary of the outer space \cite{H-16}. 
\end{quote}

\br

\begin{defn}
We say that the group $G$ has the  {\em Unique stationary boundary} property (USB for short) if, 
for some admissible $\mu$,  it acts effectively on its universal
Poisson $\mu$-boundary $(B,\nu)$ and $\nu$ is the unique $\mu$-stationary
probability mesure on $B$.
Note that by Theorem \ref{equiv}, as in this case $B = \Pi_s(G)$, such a group has a trivial
amenable radical (see \cite[Proposition 3.12]{HK-23}).
\end{defn}

\begin{rmk}
When $G$ is an infinite countable and discrete group the space $\Lcal = \ell^\infty(G)$ and then for
any admissible probability measure $\mu$ on $G$ the space $B$ is the 
 the Gelfand spectrum of left uniformly continuous bounded harmonic functions
which is a commutative von Neumann algebra.
As was shown by Ozawa it then follows that the flow $(B,G)$ can never be minimal.
Thus a USB group is not discrete. By Furstenberg's theory every semisimple Lie group with trivial center
is USB and then $B = G/P$. Are there USB groups where $B= B(G,\mu)$ is not a homogeneous space ?
\end{rmk}

%
%\begin{question}
%Characterize the class of countable discrete USB groups. 
%Are they necessarily $C^*$-simple ?
%\end{question}
%
%\begin{question}
%Is there an example of a LCSC group $G$ and an admissible measure $\mu$
%such that the corresponding topological universal Poisson $\mu$-boundary $(B,\nu)$ is minimal
%but $\nu$ is not the unique $\mu$-stationary measure on $B$ ?
%\end{question}

\end{document}